\newcommand{\Fcal}{\mathcal{F}}
\newcommand{\Ical}{\mathcal{I}}
\newcommand{\Jcal}{\mathcal{J}}
\newcommand{\Lcal}{\mathcal{L}}
\newcommand{\Pcal}{\mathcal{P}}
\newcommand{\ch}{\mathbf{1}}
\newcommand{\al}{\alpha}
\newcommand{\Ga}{\Gamma}
\newcommand{\ga}{\gamma}
\newcommand{\del}{\delta}
\newcommand{\br}{\vspace{3 mm}}
\newcommand{\cls}{{\rm{cls\,}}}
\theoremstyle{plain}
\newtheorem{thm}{Theorem}[section]
\newtheorem{cor}[thm]{Corollary}
\theoremstyle{definition}
\newtheorem{defns}[thm]{Definitions}
\newtheorem{prob}[thm]{Problem}
\numberwithin{thm}{section}
\begin{document}

\title
{Translation-finite sets}

\author[]{Eli Glasner}
\address{Department of Mathematics,
Tel-Aviv University, Ramat Aviv, Israel}
\email{glasner@math.tau.ac.il}
\urladdr{http://www.math.tau.ac.il/$^\sim$glasner}


\date{June 9, 2011}

\begin{abstract}
The families of right (left) translation finite subsets of a discrete infinite group 
$\Ga$ are defined and shown to be ideals. Their kernels $Z_R$ and $Z_L$
are identified as the closure of the set of products $pq$ ($p\cdot q$) in the 
\v{C}ech-Stone compactification $\beta\Ga$. Consequently it is shown 
that the map $\pi: \beta\Ga \to \Ga^{WAP}$,
the canonical semigroup homomorphism from $\beta\Ga$ onto $\Ga^{WAP}$,
the universal semitopological semigroup compactification of $\Ga$,
is a homeomorphism on the complement of $Z_R \cup Z_L$.
\end{abstract}



\maketitle

\section*{Introduction}
This note is an elaboration on the beautiful work of Ruppert \cite{Ru} from 1985.
Given a discrete infinite group $\Ga$ we define right and left versions of the 
combinatorial property (of subsets of $\Ga$) of being translation finite.
Then, using the ultrafilter representation of the \v{C}ech-Stone compactification 
$\beta\Ga$, we show that the collections of sets with these properties form ideals
(Theorem \ref{Fcal}).
This yields a new proof of Ruppert's theorem which asserts that the collection
of translation finite sets forms an ideal. We then use these results to obtain 
some unexpected information about the map $\pi: \beta\Ga \to \Ga^{WAP}$,
the canonical semigroup homomorphism from $\beta\Ga$ onto $\Ga^{WAP}$,
the universal semitopological semigroup compactification of $\Ga$
(Theorem \ref{pi}).

\section{The $C^*$-algebras $\ell_\infty(\Ga)$ and $WAP(\Ga)$}
Let $\Ga$ be a countable discrete infinite group with unit element $e$. 
We briefly review some basic properties of the $C^*$-algebras
$\ell_\infty(\Ga)$, of bounded complex-valued functions on $\Ga$, and  
$WAP(\Ga)$, the closed subalgebra comprising the weakly almost periodic functions  on $\Ga$.  
Recall that $f \in \ell_\infty(\Ga)$ is {\em weakly almost periodic} if its orbit under translations $\{f \circ \ga: \ga \in \Ga\}$ is a weakly precompact 
subset of the Banach space $\ell_\infty(\Ga)$.
We are mostly interested in their Gelfand (or maximal ideal) spaces: 
$\beta\Ga$, the \v{C}ech-Stone compactification of $\Ga$, and $\Ga^{WAP}$, 
the universal WAP-compactification of $\Ga$, respectively.

The compactification $\beta\Ga$ can be viewed as the collection of 
ultrafilters on $\Ga$,
where an element $\ga \in \Ga$ is presented as the principal ultrafilter
$e_\ga = \{A \subset \Ga : \ga \in A\}$. Then the left translation of 
an ultrafilter $q \in \beta\Ga$ by $\ga$ is the ultrafilter
$\ga q = \{ A \subset \Ga : \ga^{-1} A \in q\}$
(note that this extends the product on $\Ga$ as $\ga e_\del = e_{\ga\del}$) . 
These translations define a left action of $\Ga$ on $\beta\Ga$ and the resulting pointed dynamical system 
$(\beta\Ga,e,\Ga)$ is the {\em universal ambit} (or point transitive pointed system).
That is, for any point transitive pointed $\Ga$ dynamical system $(Y,y_0,\Ga)$
there is a unique homomorphism of pointed dynamical systems
$\pi : (\beta\Ga,e,\Ga) \to (Y,y_0,\Ga)$.

This $\Ga$ action on $\beta\Ga$ can be extended to 
a multiplication on $\beta\Ga$ as follows: for $p,q \in \beta\Ga$
$$
m_R(p,q)= pq = \{A \subset \Ga : \{\al \in \Ga : \al^{-1} A \in q\} \in p\}.
$$
This multiplication has the property that for each fixed $q \in \beta\Ga$
the map $R_q : \beta\Ga \to \beta\Ga$, defined by $p \mapsto
pq = m_R(p,p)$ is continuous. Thus this product makes $\beta\Ga$
a {\em right topological semigroup}. It can be shown that this 
right topological semigroup can be identified with the enveloping
semigroup $E(\beta\Ga,\Ga)$ of the dynamical system $(\beta\Ga,\Ga)$.

One can also define a left product on $\beta\Ga$ by
$$
m_L(p,q)= p\cdot q = \{A \subset \Ga : \{\al \in \Ga : A\al^{-1} \in p\} \in q\}.
$$
This extension of the product on $\Ga$ to a product on $\beta\Ga$
makes $\beta\Ga$ a {\em left topological semigroup}, i.e. one in which the maps
$L_q : \beta\Ga \to \beta\Ga$, defined by 
$p \mapsto q \cdot p = m_L(p,p)$, are continuous.

The {\em remainder space} of $\Ga$ is the compact space 
$X := \beta\Ga^* = \beta\Ga \setminus \Ga$.  Clearly
$X$ is a subsemigroup of $\beta\Ga$ with respect to both
right and left multiplications.
We let $Z_R : = \cls{X^2} =\cls\{pq: p,q \in X\}$ and
$Z_L : = \cls{X^{\cdot 2}} =\cls\{p \cdot q: p,q \in X\}$.
We also set $Z = Z_R \cup Z_L$.

\br

As the algebra $C_0(\Ga)$, comprising the functions on $\Ga$
which vanish at infinity, is contained in the algebra $WAP(\Ga)$
we deduce that $WAP(\Ga)$ distinguishes points in $\Ga$ and that
consequently the natural compactification map of $\Ga$ into $\Ga^{WAP}$ 
is an isomorphism.
We will therefore consider $\Ga$ as a dense discrete subset of both 
$\beta{\Ga}$ and $\Ga^{WAP}$.

A dynamical system $(X,\Ga)$ is called {\em weakly almost periodic}
(WAP) if for every $F \in C(X)$, its orbit $\{F \circ \ga : \ga \in \Ga\}$ forms
a weakly precompact subset of the Banach space $C(X)$. A theorem
of Ellis and Nerurkar which is based on well known results of 
Grothendiek asserts that a system $(X,\Ga)$ is WAP iff its enveloping
semigroup $E(X)$ consists of continuous maps, iff $E(X)$ is a
{\em semitopological semigroup} (that is, one in which both right and left multiplications
are continuous). It then follows that the dynamical system
$\Ga^{WAP}$ is the universal WAP point transitive dynamical system.
Moreover, $\Ga^{WAP}$ is isomorphic to its own enveloping 
semigroup and is therefore also the maximal semitopological semigroup compactification of $\Ga$.

Let $\pi : \beta\Ga \to \Ga^{WAP}$ denote the canonical homomorphism of the corresponding dynamical systems. With our identifications of $\Ga$ 
as a subset of both $\beta\Ga$ and $\Ga^{WAP}$ we have
$\pi(\ga) = \ga$ for every $\ga \in \Ga$. We set 
$Y : = \Ga^{WAP} \setminus \Ga$. 
As a direct consequence of the discussion above we see that for every 
$p, q \in \beta\Ga$ we have $\pi(pq) = \pi(p)\pi(q)$ 
and $\pi(p \cdot q)=\pi(p)\pi(q)$. Consequently $\pi(Z_R \cup Z_L) = \cls Y^2$. 
A result of our analysis shows that the restricted map
$$
\pi : \beta\Ga \setminus Z \to \Ga^{WAP} \setminus \cls Y^2
$$ 
is a homeomorphism (Theorem \ref{pi} below).
This extends results of Ruppert and Hindman and Strauss 
(see \cite[Theorem 21.22]{H-S}).

\br

\section{Translation-finite sets}

\begin{defns}\label{defn}
\begin{enumerate}
\item
Let $Z_R = \cls{X^2} \subset X$. Set
$$
\Ical_R = \{A \subset \Ga : \cls{A} \cap Z_R = \emptyset\}.
$$
Set 
$\Fcal_R = \{B \subset \Ga : B^c \in \Ical\}
=\{B \subset \Ga : \cls{B} \supset Z_R\}$.
Clearly $\Ical_R$ is an ideal and $\Fcal_R$ is a filter.
\item
Set $Z_L = \cls{X^{\cdot 2}} \subset X$, where
$X^{\cdot 2}= \{p \cdot q : p,q \in X\}$.
The ideal $\Ical_L$ and the filter $\Fcal_L$ are then defined
as above with $Z_L$ replacing $Z_R$.
\item
Let $Z = Z_R \cup Z_L \subset  X$. Set
$$
\Ical = \{A \subset \Ga : \cls{A} \cap Z = \emptyset\} = \Ical_R \cap \Ical_L.
$$
Set 
$\Fcal = \{B \subset \Ga : B^c \in \Ical\}=\{B \subset \Ga : \cls{B} \supset Z\}$.
Clearly then $\Ical$ is an ideal and $\Fcal = \Fcal_R \cap \Fcal_L$ is a filter.
\item
A subset $A \subset \Ga$ is called {\em right translation-finite} 
(RTF for short) if for every infinite
$D \subset \Ga$ there is a finite $F \subset D$ such that 
$\cap_{\del \in F}A\del^{-1}$ is finite.
We denote by $\Ical_{RTF}$ be the collection of RTF subsets of $\Ga$.
We say that a subset $B \subset \Ga$ is {\em co-right-translation-finite} (CRTF)
if $B^c = \Ga \setminus B$ is RTF and denote the collection of CRTF sets
by $\Fcal_{RTF}$.
Thus a subset $B \subset \Ga$ is CRTF if for every infinite subset
$D \subset \Ga$ there is a finite subset $F \subset D$ such that
$\cup_{\del \in F} B\del^{-1}$ is co-finite in $\Ga$.
These notions have obvious left analogues,  LTF subsets of $\Ga$,
$\Ical_{LTF}$, etc.
Following Ruppert we say that elements of 
$\Ical_{TF} := \Ical_{LTF} \cap \Ical_{RTF}$
are {\em translation-finite sets} (TF).
\item
We let $\Ical_W$ be the collection of sets $A \subset \Ga$ such that 
$\cls{A}$ is an open subset of $\Ga^{WAP}$ with 
$\cls{A} \cap \cls{Y^2} = \emptyset$. Then
$\Fcal_{W} = \{A^c : A \in \Ical_W\}$.
\item
We say that $A \subset \Ga$ is a {\em W-interpolation set}
if $\cls{A} \subset \Ga^{WAP}$ is an open subset of $\Ga^{WAP}$
which is homeomorphic to $\beta{A}$.
We let $\Ical_{IW}$ denote the collection of W-interpolation sets,
and let $\Fcal_{IW} = \{A^c : A \in \Ical_{IW}\}$.
\end{enumerate}
\end{defns}

Recall the following theorems of Ruppert (Theorem 7 and 
Proposition 13 in \cite{Ru}).

\begin{thm}\label{Ru}
\begin{enumerate}
\item
$\Ical_{TF}$ is an ideal and
$\Ical_{TF} = \Ical_{W} = \Ical_{IW}$.
\item
Every infinite subset of $\Ga$ contains an infinite TF subset.
\end{enumerate}
\end{thm}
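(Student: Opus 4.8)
The plan is to push everything into the ultrafilter semigroup $\beta\Ga$; throughout, $\cls{A}$ denotes the (clopen) closure of $A$ in $\beta\Ga$, so that the closure of $A$ in $\Ga^{WAP}$ is $\pi(\cls{A})$. The decisive first step is the ultrafilter dictionary: $A$ is LTF iff $\cls{A}\cap Z_R=\emptyset$, and $A$ is RTF iff $\cls{A}\cap Z_L=\emptyset$. Each equivalence is a short computation in both directions. If $A$ is not LTF, fix an infinite $D$ with $\cap_{\del\in F}\del^{-1}A$ infinite for every finite $F\subset D$, extend the family $\{\del^{-1}A:\del\in D\}$ to a non-principal ultrafilter $q$ and $D$ to a non-principal ultrafilter $p$; then $D\subset\{\al:\al^{-1}A\in q\}\in p$, so $A\in pq$, i.e. $pq\in\cls{A}$, while $pq\in X^2\subset Z_R$. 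Conversely, if $\cls{A}\cap Z_R\ne\emptyset$ then, $\cls{A}$ being clopen and meeting $\cls{X^2}$, it meets $X^2$; so $A\in pq$ for some $p,q\in X$, and $S:=\{\al:\al^{-1}A\in q\}\in p$ is infinite with $\cap_{\al\in F}\al^{-1}A\in q$ infinite for every finite $F\subset S$, so $A$ is not LTF. The RTF statement is the mirror image, using $m_L$ in place of $m_R$. Hence $\Ical_{TF}=\Ical$, and since $\Ical=\{A:\cls{A}\cap Z=\emptyset\}$ is hereditary and closed under finite unions (as $\cls{A\cup B}=\cls{A}\cup\cls{B}$), this already shows that $\Ical_{TF}$ is an ideal.

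The second ingredient is Grothendieck's double-limit criterion: $f\in WAP(\Ga)$ iff $\lim_n\lim_m f(\ga_n\del_m)=\lim_m\lim_n f(\ga_n\del_m)$ whenever both iterated limits exist. Applied to a $\{0,1\}$-valued $f=\mathbf{1}_A$, a failure of the criterion produces, after the routine passage to subsequences, a triangular configuration: sequences $(\ga_n),(\del_m)$ of distinct elements with $\ga_n\del_m\in A\iff m\ge n$, or with $\ga_n\del_m\in A\iff n\ge m$. The first exhibits $A$ as not LTF (take $D=\{\ga_n\}$: every finite $F\subset D$ has $\cap_{\ga\in F}\ga^{-1}A$ containing all $\del_m$ with $m$ large), the second as not RTF; hence $A$ TF implies $\mathbf{1}_A\in WAP(\Ga)$. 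Together with the standard Gelfand facts --- $\mathbf{1}_A\in WAP(\Ga)$ iff $\pi(\cls{A})$ is clopen in $\Ga^{WAP}$ iff $\pi(\cls{A})\cap\pi(\cls{A^c})=\emptyset$; $\Ga$ is open and discrete in $\Ga^{WAP}$ with $\pi^{-1}(\Ga)=\Ga$, so $\pi(X)=Y$; and $\pi^{-1}(\pi(\cls{A}))=\cls{A}$ once $\pi(\cls{A})$ is clopen --- this yields $\Ical_{TF}\subset\Ical_W$: if $A$ is TF then $\pi(\cls{A})$ is clopen, and, since $\cls{Y^2}=\pi(Z)$, any point of $\pi(\cls{A})\cap\cls{Y^2}$ would pull back into $\cls{A}\cap Z=\emptyset$. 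The reverse inclusion needs no double limit: if $p_1,p_2\in X$ with $p_1p_2\in\cls{A}$ then $\pi(p_1p_2)=\pi(p_1)\pi(p_2)\in Y^2$ lies in $\pi(\cls{A})\cap\cls{Y^2}$, contradicting $A\in\Ical_W$; hence $\cls{A}\cap Z_R=\emptyset$ and, symmetrically, $\cls{A}\cap Z_L=\emptyset$. Thus $\Ical_{TF}=\Ical_W$.

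For $\Ical_{TF}=\Ical_{IW}$, note first that $A\in\Ical_{IW}$ --- meaning $\pi(\cls{A})$ clopen and $\pi$ injective on $\cls{A}$ --- is equivalent to the statement that $\mathbf{1}_C\in WAP(\Ga)$ for every $C\subset A$: given the injectivity, each clopen $\cls{C}\subset\cls{A}$ has clopen image inside the clopen set $\pi(\cls{A})$, meeting $\Ga$ exactly in $C$, whence $\mathbf{1}_C\in WAP(\Ga)$; conversely the functions $\mathbf{1}_C$, $C\subset A$, generate $\ell_\infty(A)$, which forces $WAP(\Ga)|_A=\ell_\infty(A)$ and hence $\pi$ injective on $\cls{A}$. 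Now $\Ical_{TF}\subset\Ical_{IW}$ is immediate: TF is hereditary, so every $C\subset A$ is TF and $\mathbf{1}_C\in WAP(\Ga)$ by the previous paragraph. For $\Ical_{IW}\subset\Ical_{TF}$, suppose $A$ is not TF, say not LTF; from the witnessing infinite $D$, choose $\del_j\in D$ and $\ga_k\in\cap_{j\le k}\del_j^{-1}A$ alternately, discarding at each stage the finitely many choices for which some product $\del_j\ga_k$ would coincide with a product $\del_{j'}\ga_{k'}$ having $j'\le k'$; then $C:=\{\del_j\ga_k:j\le k\}\subset A$ satisfies $\del_j\ga_k\in C\iff j\le k$, so $\mathbf{1}_C$ realizes a triangular array along $(\del_j\ga_k)$ and $\mathbf{1}_C\notin WAP(\Ga)$, contradicting $A\in\Ical_{IW}$; the case ``not RTF'' is symmetric.

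Finally, for part (2) I would build $A=\{a_n\}\subset B$ greedily so that $(i,j)\mapsto a_i^{-1}a_j$ and $(i,j)\mapsto a_ia_j^{-1}$ are finite-to-one on $\{i\ne j\}$ --- possible because at each stage only finitely many values of $a_n$ need be excluded while $B$ stays infinite. Such an $A$ is TF: for any infinite $D$ choose distinct $\del_1,\del_2\in D$; if $\ga\del_1,\ga\del_2\in A$ then $\del_1^{-1}\del_2=(\ga\del_1)^{-1}(\ga\del_2)$ has only finitely many representations $a_i^{-1}a_j$, each fixing $\ga$, so $A\del_1^{-1}\cap A\del_2^{-1}$ is finite; the left condition is handled identically using $a_ia_j^{-1}$. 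The main obstacle throughout is the extraction of a genuinely clean triangular configuration from a double-limit failure and the book-keeping in the interleaved construction of $C$ in the $\Ical_{IW}$ step; the remaining manipulations with ultrafilters and with Gelfand duality are routine.
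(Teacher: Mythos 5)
Your proposal is essentially correct, but be aware that the paper itself gives no proof of Theorem \ref{Ru}: it is imported from Ruppert \cite{Ru} (Theorem 7 and Proposition 13), and the paper's Theorem \ref{Fcal}(3) and Theorem \ref{pi} are deduced \emph{using} it. What you have done is reprove Ruppert's theorem from scratch by a hybrid route: your ultrafilter dictionary is in substance the paper's own Theorem \ref{Fcal} (so you rightly do not quote its part (3), which would be circular), while the identifications $\Ical_{TF}=\Ical_W=\Ical_{IW}$ use Grothendieck's double-limit criterion plus Gelfand/Stone--\v{C}ech duality, i.e.\ exactly Ruppert's original toolkit, and your greedy ``finite-to-one quotient'' construction is a clean elementary substitute for Ruppert's Proposition 13; the gain is a self-contained proof in which the $\beta\Ga$ picture and the WAP picture are glued by the single fact $\pi^{-1}(\pi(\cls{A}))=\cls{A}$ for $\ch_A\in WAP(\Ga)$, the loss is that you re-derive material the paper simply cites. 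Three small points. Your dictionary pairs LTF with $Z_R$ and RTF with $Z_L$, opposite to the labels in Theorem \ref{Fcal}; your computation is the correct one for the displayed formula for $m_R$ (the paper's proof of Theorem \ref{Fcal} actually uses the $m_L$ formula under the name $pq$), and the swap is harmless here since only $Z=Z_R\cup Z_L$ and $\Ical_{TF}=\Ical_{LTF}\cap\Ical_{RTF}$ enter. In the $\Ical_{IW}\subset\Ical_{TF}$ step the exclusions must also prevent an off-diagonal product $\del_j\ga_k$ ($j>k$) from colliding with a \emph{later} in-$C$ product; your interleaving handles this because each collision can be forbidden at the stage when its second participant is chosen. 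In part (2), same-stage coincidences $a_i^{-1}a_n=a_n^{-1}a_j$ cannot always be avoided (e.g.\ in Boolean groups), so ``finite-to-one'' rather than ``injective'' is indeed the right requirement, and avoiding only previously attained quotient values already yields at most $2$-to-$1$ maps; finally, point-separation by the $\ch_C$, $C\subset A$, suffices for injectivity of $\pi$ on $\cls_{\beta\Ga}A$, so the assertion $WAP(\Ga)|_A=\ell_\infty(A)$ need not be justified. With these routine details filled in, the argument is complete.
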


\br \br 

Ruppert's main tools in analyzing the TF property were 
the universal WAP compactification of $\Ga$ and Grothendieck's
double limit characterization of WAP functions.
Our approach is through the \v{C}ech-Stone compactification of $\Ga$ and the
combinatorial definition of the product of ultrafilters. 

\begin{thm}\label{Fcal}
\begin{enumerate}
\item
$\Fcal_R=\Fcal_{RTF}$,
in particular $\Fcal_{RTF}$ is a filter.
\item
$\Fcal_L=\Fcal_{LTF}$,
in particular $\Fcal_{LTF}$ is a filter.
\item
$\Fcal=\Fcal_{TF} = \Fcal_{RTF} \cap \Fcal_{LTF}$, hence 
\begin{equation*}\label{RL}
\Ical_{TF} = \Ical_{LTF} \cap \Ical_{RTF} = \Ical_L \cap \Ical_R = \Ical  
= \Ical_{W} = \Ical_{IW}
\end{equation*}
\end{enumerate}
\end{thm}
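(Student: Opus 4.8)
The plan is to convert the topological condition defining $\Fcal_R$ (equivalently its complementary ideal $\Ical_R$) into the combinatorial condition defining $\Fcal_{RTF}$ (equivalently $\Ical_{RTF}$) by unwinding the ultrafilter formula for the product $m_R$; part (2) is then the mirror statement with $m_L$ replacing $m_R$, and part (3) follows formally from (1), (2) and Ruppert's Theorem \ref{Ru}(1). Two elementary facts will be used repeatedly: for $A\subset\Ga$ the set $\cls A=\{p\in\beta\Ga:A\in p\}$ is clopen in $\beta\Ga$; and a subset $S\subset\Ga$ is finite if and only if it lies in no free ultrafilter, since an infinite set together with the filter of cofinite subsets of $\Ga$ has the finite intersection property and hence is contained in some $q\in X$.

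First I would reduce to ultrafilters. Because $\cls A$ is clopen and $Z_R=\cls{X^2}$, we have $\cls A\cap Z_R=\emptyset$ iff $\cls A\cap X^2=\emptyset$, i.e.\ iff $A\notin pq$ for all $p,q\in X$. Unwinding $m_R$, $A\in pq$ means $\{\al\in\Ga:\al^{-1}A\in q\}\in p$, so for a fixed $q\in X$ the assertion ``$A\notin pq$ for every $p\in X$'' says precisely that $S_q:=\{\al\in\Ga:\al^{-1}A\in q\}$ lies in no free ultrafilter, i.e.\ that $S_q$ is finite. Hence
$$
\cls A\cap Z_R=\emptyset \iff S_q\ \text{is finite for every}\ q\in X .
$$

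Next I would identify the right-hand condition with $A\in\Ical_{RTF}$, i.e.\ with ``for every infinite $D\subset\Ga$ there is a finite $F\subset D$ with $\bigcap_{\del\in F}\del^{-1}A$ finite'' (up to the left/right conventions fixed in Section 1). If $S_q$ is infinite for some $q\in X$, put $D:=S_q$; then for every finite $F\subset D$ the set $\bigcap_{\del\in F}\del^{-1}A$ belongs to the free ultrafilter $q$, hence is infinite, so the $\Ical_{RTF}$-condition fails for $D$. Conversely, if some infinite $D$ has $\bigcap_{\del\in F}\del^{-1}A$ infinite for all finite $F\subset D$, then $\{\del^{-1}A:\del\in D\}$ together with the cofinite subsets of $\Ga$ has the finite intersection property, so it extends to a free ultrafilter $q\in X$ with $D\subseteq S_q$; thus $S_q$ is infinite. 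This gives $\Ical_R=\Ical_{RTF}$, hence $\Fcal_R=\Fcal_{RTF}$, which is a filter because $\Fcal_R$ is (Definitions \ref{defn}(1)). Part (2) is the same argument applied to $m_L$, $Z_L$, $X^{\cdot2}$ and the translates $A\al^{-1}$.

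For part (3): Definitions \ref{defn}(3) gives $\Ical=\Ical_R\cap\Ical_L$ and $\Fcal=\Fcal_R\cap\Fcal_L$, so (1) and (2) yield $\Ical=\Ical_{RTF}\cap\Ical_{LTF}=\Ical_{TF}$ and $\Fcal=\Fcal_{RTF}\cap\Fcal_{LTF}=\Fcal_{TF}$; invoking Ruppert's Theorem \ref{Ru}(1), which supplies $\Ical_{TF}=\Ical_W=\Ical_{IW}$, closes the displayed chain of equalities. The only step with real content is the identification with $\Ical_{RTF}$, and the hard part there is building the free ultrafilter $q$: one must adjoin the cofinite subsets of $\Ga$ to the family $\{\del^{-1}A:\del\in D\}$ before extending, so that $q$ lands in $X$ and is not a principal ultrafilter. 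The clopen reduction, the unwinding of the two products, and the bookkeeping in part (3) are all routine.
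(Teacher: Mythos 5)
Your argument is correct, and it rests on the same two pillars as the paper's proof: the ultrafilter formula for the product and the extension, via the finite intersection property together with the cofinite filter, of the family of translates of $A$ to a free ultrafilter. The packaging, however, is genuinely different. You work entirely on the ideal side and establish the single equivalence ``$A$ belongs to some product of two free ultrafilters if and only if some infinite $D$ has all finite-intersection translates infinite,'' from which $\Ical_R=\Ical_{RTF}$ and hence $\Fcal_R=\Fcal_{RTF}$ follow at once. The paper instead proves two inclusions of filters: the inclusion $\Fcal_{RTF}\subset\Fcal_R$ is done dually, through a CRTF set $B$, a cofinite union of translates, and the fact that an ultrafilter containing a finite union must contain one of its members, while only the reverse inclusion uses the ultrafilter construction you use. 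Your version is a little cleaner and, unlike the paper, makes explicit that the constructed ultrafilters are \emph{free}: the paper chooses ``some ultrafilter $p\supset\Lcal$'' and ``some $q$ with $D\in q$'' without remarking that freeness is needed for the product to lie in $Z_R=\cls{X^2}$, though it is available for exactly the reason you give (all sets involved are infinite, so the cofinite filter can be adjoined). One caveat on conventions: unwinding the paper's literal formula for $m_R$, as you do, produces the left translates $\del^{-1}A$, which under Definition \ref{defn}(4) is the LTF condition, whereas the paper's own proof unwinds $pq$ as $\{\ga : A\ga^{-1}\in p\}\in q$, i.e. the $m_L$ formula; this is an internal label swap in the paper itself, and since parts (1) and (2) are mirror images your parenthetical ``up to the left/right conventions'' disposes of it, but it would be better to state explicitly which product is paired with which translation property. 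Your treatment of parts (2) and (3) (symmetry, then intersections plus Ruppert's Theorem \ref{Ru}(1)) coincides with the paper's.
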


\begin{proof}
We prove the two inclusions of claim (1) below. The claim (2) then holds by symmetry and claim (3) is obtained by taking the appropriate intersections
and applying Ruppert's theorem. 

{\bf{Side 1}:} \ 
We first show that $\Fcal_{RTF} \subset \Fcal_R$.
Consider $B \in \Fcal_{RTF}$ and suppose $A \subset \Ga$ has the property that
there are $p,q \in X$ with $A  \in pq$; i.e. $Ap^{\leftarrow}:
=\{\ga \in \Ga : A\ga^{-1} \in p\} \in q$. Then $|Ap^{\leftarrow}|=\infty$ and by assumption
there is a finite subset $F \subset Ap^{\leftarrow}$ such that
$\cup_{\del \in F} B \del^{-1}$ is cofinite in $\Ga$. As $p$ is an ultrafilter 
this implies that for some $\del \in F$ we have $B \del^{-1} \in p$. 
Now, as both $B \del^{-1}$ and $A\del^{-1}$ are in $p$ so is $(A\cap B)\del^{-1}$.
In particular we conclude that $A \cap B \not=\emptyset$.
This discussion shows that for any two ultrafilters $p,q$ in $X$
their product $pq$ is in $\cls{B}$; hence $\cls{B} \supset Z_R$,
i.e. $B \in \Fcal_R$.

{\bf{Side 2}:} \ 
Next we show that $\Fcal_R \subset \Fcal_{RTF}$.
Suppose then that $A \subset \Ga$ is not in $\Fcal_{RTF}$; i.e. there is
an infinite $D \subset \Ga$ such that for every finite $F \subset D$ we have
$| (AF^{-1})^c | = |\Ga \setminus \cup_{\del \in F} A \del^{-1}| =\infty$.
Clearly then the collection of sets of the form $(AF^{-1})^c$,
with $F \subset D$ finite, is a filter, say $\Lcal$, on $\Ga$. Choose some
ultrafilter $p \supset \Lcal$. Now choose an ultrafilter $q$ with $D \in q$.
We will show that $A \not \in pq$, whence $A \not \in \Fcal_R$, as required.

Assuming $A \in pq$ we have $Ap^{\leftarrow} = \{\ga \in \Ga: A\ga^{-1} \in p\}
\in q$. However if $\del \in D$ then 
$(A  \del^{-1})^c \in \Lcal$, hence $(A  \del^{-1})^c \in p$,
hence $A  \del^{-1} \not \in p$, hence
$D^c \supset Ap^{\leftarrow} \in q$, hence $D^c \in q$. 
This is a contradiction and we conclude that indeed $A \not \in pq$.
\end{proof}

\begin{thm}\label{pi}
\begin{enumerate}
\item
We have $\pi^{-1}(\cls{Y^2}) = Z = Z_R \cup Z_L$, hence
$\pi^{-1}(Y \setminus \cls{Y^2}) = X \setminus Z$.
\item
The restriction of $\pi$ to the open dense subset 
$X \setminus Z$ of $X$
is a homeomorphism from $X \setminus  Z$ onto 
$Y \setminus \cls{Y^2}$.
\end{enumerate}
\end{thm}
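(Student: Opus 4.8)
The plan is to establish the sharper statement that $\pi$ restricts to a homeomorphism of $\beta\Ga\setminus Z$ onto $\Ga^{WAP}\setminus\cls{Y^2}$, and then to cut this down to $X=\beta\Ga\setminus\Ga$; both parts of the theorem then fall out. For $A\subseteq\Ga$ write $\widehat A:=\{p\in\beta\Ga:A\in p\}$, the closure of $A$ in $\beta\Ga$; these clopen sets form a base for $\beta\Ga$, and by Definition \ref{defn} membership $A\in\Ical$ is exactly the condition $\widehat A\cap Z=\emptyset$, so $\beta\Ga\setminus Z=\bigcup\{\widehat A:A\in\Ical\}$ (closures in $\Ga^{WAP}$ will be written $\cls{(\cdot)}$, as in Definition \ref{defn}(5)). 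Two facts are used repeatedly. First, since $C_0(\Ga)\subseteq WAP(\Ga)$ (Section 1), each singleton $\{\ga\}$, $\ga\in\Ga$, is clopen in $\Ga^{WAP}$ and $\pi^{-1}(\ga)=\{\ga\}$, so that $\pi^{-1}(\Ga)=\Ga$. Second --- and this is the crux --- I claim that
\begin{equation}\label{keyid}
\pi^{-1}\bigl(\cls{A}\bigr)=\widehat A\qquad\text{for every }A\in\Ical .
\end{equation}
Indeed, by Theorem \ref{Fcal} one has $\Ical=\Ical_W$, so for $A\in\Ical$ the set $\cls{A}$ is clopen in $\Ga^{WAP}$ (a closure, open by the definition of $\Ical_W$) and disjoint from $\cls{Y^2}$; its indicator lies in $C(\Ga^{WAP})$, and since $\Ga$ is discrete in $\Ga^{WAP}$ the restriction of that indicator to $\Ga$ is $\ch_A$, whence $\ch_A\in WAP(\Ga)$. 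The continuous extension of $\ch_A$ to $\beta\Ga$ is $\ch_{\widehat A}$, while $\ch_{\cls{A}}\circ\pi$ is also continuous on $\beta\Ga$ and restricts to $\ch_A$ on the dense set $\Ga$; equating the two extensions gives \eqref{keyid}.

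Granted \eqref{keyid}, claim (1) is immediate. From $\pi(Z)=\cls{Y^2}$ (recall this from the Introduction) we get $Z\subseteq\pi^{-1}(\cls{Y^2})$; conversely, if $p\in\beta\Ga\setminus Z$, pick $A\in\Ical$ with $p\in\widehat A$, so by \eqref{keyid} $\pi(p)\in\cls{A}$, and $\cls{A}\cap\cls{Y^2}=\emptyset$ forces $p\notin\pi^{-1}(\cls{Y^2})$. Hence $\pi^{-1}(\cls{Y^2})=Z$. The second part of (1) follows because, using $\pi^{-1}(\Ga)=\Ga$, $\pi^{-1}(Y\setminus\cls{Y^2})=\beta\Ga\setminus\bigl(\Ga\cup\pi^{-1}(\cls{Y^2})\bigr)=\beta\Ga\setminus(\Ga\cup Z)=X\setminus Z$.

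For claim (2) I would show that $\pi|_{\beta\Ga\setminus Z}$ is a continuous open bijection onto $\Ga^{WAP}\setminus\cls{Y^2}$. Surjectivity onto this set follows from (1) together with the surjectivity of $\pi$ onto $\Ga^{WAP}$. For injectivity, given $p\neq q$ in $\beta\Ga\setminus Z$, choose $A_0\in\Ical$ with $p\in\widehat{A_0}$ and a set $A_1\in p$ with $A_1\notin q$, and put $A:=A_0\cap A_1$; then $A\in p$, $A\in\Ical$ (as $A\subseteq A_0$) and $A\notin q$ (as $A\subseteq A_1$), so by \eqref{keyid} $\pi(p)\in\cls{A}$ while $\pi(q)\notin\cls{A}$, giving $\pi(p)\neq\pi(q)$. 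For openness, every open $U\subseteq\beta\Ga\setminus Z$ is a union of basic clopen sets $\widehat A\subseteq\beta\Ga\setminus Z$, i.e. with $A\in\Ical$, and for each such $A$ one has $\pi(\widehat A)=\pi\bigl(\pi^{-1}(\cls{A})\bigr)=\cls{A}$ by \eqref{keyid} and surjectivity, which is open in $\Ga^{WAP}$; hence $\pi(U)$ is open. A continuous open bijection is a homeomorphism, so $\pi$ carries $\beta\Ga\setminus Z$ homeomorphically onto $\Ga^{WAP}\setminus\cls{Y^2}$; since $\Ga\subseteq\beta\Ga\setminus Z$ and $\pi$ maps $\Ga$ bijectively onto $\Ga$ (first preliminary fact), it restricts to a homeomorphism of $X\setminus Z=(\beta\Ga\setminus Z)\setminus\Ga$ onto $Y\setminus\cls{Y^2}=(\Ga^{WAP}\setminus\cls{Y^2})\setminus\Ga$. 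Finally, $X\setminus Z$ is open in $X$ because $Z$ is closed in $\beta\Ga$, and it is dense in $X$ by Ruppert's Theorem \ref{Ru}(2): any nonempty open subset of $X$ contains $\widehat B\cap X$ for some infinite $B\subseteq\Ga$, such a $B$ has an infinite subset $B'\in\Ical_{TF}=\Ical$, and then $\widehat{B'}\cap X$ is a nonempty subset of $(\widehat B\cap X)\cap(X\setminus Z)$.

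The substance of the argument is concentrated in the identity \eqref{keyid}, and hence in the equality $\Ical=\Ical_W$ furnished by Theorem \ref{Fcal} --- which itself rests on Ruppert's Theorem \ref{Ru}; once that is in hand, both parts are soft point-set topology. The main obstacle, beyond invoking that input, is bookkeeping: keeping $\pi^{-1}(\Ga)=\Ga$ in play so that the homeomorphism $\beta\Ga\setminus Z\cong\Ga^{WAP}\setminus\cls{Y^2}$ descends correctly to $X\setminus Z\cong Y\setminus\cls{Y^2}$, and making sure each ``closure'' is read in the correct compactification.
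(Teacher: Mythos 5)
Your argument is correct, and it rests on the same pillars as the paper's proof: the equality $\Ical=\Ical_{TF}=\Ical_W$ from Theorem \ref{Fcal} (via Ruppert's Theorem \ref{Ru}.1), the identity $\pi^{-1}(\cls_{\Ga^{WAP}}A)=\cls_{\beta\Ga}A$ for $A\in\Ical$, and Ruppert's Theorem \ref{Ru}.2 for density. The organization differs in two respects worth noting. First, the paper proves the preimage identity (its Step 3) by a net argument using clopenness of $\cls_{\Ga^{WAP}}A$, while you obtain it by extending the indicator $\ch_A$ through $WAP(\Ga)$ and invoking uniqueness of continuous extensions from the dense subset $\Ga$; these are interchangeable. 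Second, and more substantively, the paper gets the homeomorphism by quoting $\Ical_W=\Ical_{IW}$ and the universality of $\beta A$, so that $\pi$ is a homeomorphism of $\cls_{\beta\Ga}A$ onto $\cls_{\Ga^{WAP}}A$ for each basic piece (its Step 2), and then patches these local homeomorphisms together over the basis produced in its Step 1. You bypass the interpolation property $\Ical_{IW}$ entirely: injectivity of $\pi$ on $\beta\Ga\setminus Z$ is proved by separating distinct ultrafilters with a set $A\in\Ical$ (using that $\Ical$ is an ideal), and openness follows from $\pi(\cls_{\beta\Ga}A)=\cls_{\Ga^{WAP}}A$; in effect your argument re-derives the interpolation property as a byproduct of the preimage identity rather than importing it. A further small plus of your write-up is that it makes explicit the fact $\pi^{-1}(\Ga)=\Ga$ (isolatedness of points of $\Ga$ in $\Ga^{WAP}$, from $C_0(\Ga)\subset WAP(\Ga)$), which the paper uses implicitly when passing from $\beta\Ga\setminus Z$ to $X\setminus Z$ and from $\Ga^{WAP}\setminus\cls{Y^2}$ to $Y\setminus\cls{Y^2}$. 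What the paper's route buys is modularity (it leans directly on Ruppert's interpolation theorem); what yours buys is a slightly more self-contained, purely ultrafilter-theoretic verification of claims (1) and (2) once Theorem \ref{Fcal} is available.
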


\begin{proof} 
{\bf{Step 1}:} \ 
Given $y \in U \subset (\Ga^{WAP} \setminus \cls{Y^2})$, where $y \in Y$ and 
$U$ is an open subset of $\Ga^{WAP}$,
let $V$ be an open subset of $\Ga^{WAP}$
such that $y \in V \subset \cls{V} \subset U$.
The set $\tilde{V}=\pi^{-1}(V)$ is an open subset of $\beta\Ga$ such that
$\cls{\tilde{V}} \cap Z = \emptyset$ 
(since $\pi$ is a homomorphism of semigroups we have 
$\pi(X^2) = \pi(X^{\cdot 2}) = Y^2$, for both right and left semigroup structures on $\beta\Ga$). 
Let $A = \Ga \cap \tilde{V}$, 
then $\cls_{\beta\Ga}{A} = \cls{\tilde{V}}$ and therefore $A \in \Ical$. 
By Theorem \ref{Fcal}.3 we have $A \in \Ical_{TF}$
and then, by Theorem \ref{Ru}, $A \in \Ical_{W}$. We conclude that 
$\cls_{\Ga^{WAP}}{A}$ is a clopen neighborhood of $y$ which is contained in 
$U$. Thus we have shown that the collection of sets of the form $\cls{A}$ 
with $A \in \Ical_{TF}$, is a basis for the topology on 
$\Ga^{WAP} \setminus \cls{Y^2}$. 

{\bf{Step 2}:} \ 
If $A$ is any set in $\Ical_{TF}$ then again by Theorem \ref{Ru},
$A \in \Ical_W = \Ical_{IW}$ and we conclude that $\cls{A}$ is a
clopen subset of $\Ga^{WAP}$ which is homeomorphic to $\beta A$.
By the universality of $\beta{A}$ it follows that 
$\pi : \cls_{\beta\Ga}{A} \to \cls_{\Ga^{WAP}}{A}$ is a homeomorphism.

{\bf{Step 3}:} \ 
Again if $A$ is any set in $\Ical_{TF}$ then, by Theorem \ref{Ru},
$A \in \Ical_W$ and we conclude that $\cls{A}$ is a
clopen subset of $\Ga^{WAP}$. We claim that $\pi^{-1}(\cls_{\Ga^{WAP}}{A}) =  \cls_{\beta\Ga}{A}$. Clearly $\cls_{\beta\Ga}{A}
\subset \pi^{-1}(\cls_{\Ga^{WAP}}{A})$.
Conversely, if $p \in \beta\Ga$ with 
$\pi(p) = y \in \cls_{\Ga^{WAP}}{A}$, let $p = \lim \ga_\nu$ for a net
$\ga_\nu \in \Ga$. Then 
$y = \pi(p) = \lim \pi(\ga_\nu) = \lim \ga_nu$ and, as by assumption
the set $\cls_{\Ga^{WAP}}{A}$ is a clopen subset of $\Ga^{WAP}$,
it follows that eventually $\ga_\nu \in A$. Thus we have $p 
\in \cls_{\beta\Ga}{A}$ as claimed.

{\bf{Step 4}:} \  
By Proposition 13 of \cite{Ru} (Theorem \ref{Ru}.2), every infinite subset 
$B \subset \Ga$
contains an infinite subset $A \subset B$ with $A \in \Ical_{TF}$.
In view of step 1 above this shows that the set 
$Y \setminus \cls{Y^2}$ is a dense open subset of $Y$.

{\bf{Step 5}:} \ 
Summing up we have shown that (i) the collection of clopen sets 
$\{\cls_{\Ga^{WAP}}{A} : A \in \Ical_{TF}\}$
forms a basis for the topology on $\Ga^{WAP} \setminus \cls{Y^2}$, (ii)  for each $A \in \Ical_{TF}$,
$\pi^{-1}(\cls_{\Ga^{WAP}}{A}) =  \cls_{\beta\Ga}{A}$ and moreover (iii)
$\pi : \cls_{\beta\Ga}{A} \to \cls_{\Ga^{WAP}}{A}$ is a homeomorphism.
These facts together with the fact that $Y \setminus \cls{Y^2}$ is a dense open subset of $Y$ prove the assertions of Theorem \ref{pi}.
\end{proof}

\br

\section{Divisible properties, IP and D sets}
In \cite{Gl-80} a collection $\Pcal$ of subsets of $\Ga$ is called
a {\em divisible property} if 
\begin{enumerate}
\item[(i)]
 $\emptyset \not\in \Pcal$
and $\Ga \in \Pcal$, 
\item[(ii)]
$\Pcal$ is hereditary upward (i.e. $A \in \Pcal$ and $B\supset A$
imply $B \in \Pcal$ and 
\item[(iii)]
 if $A\in \Pcal$ is a union 
$A =A_1 \cup A_2$ then at least one of the sets $A_1$ and 
$A_2$ is in $\Pcal$.
\end{enumerate}
A collection $\Pcal$ is {\em divisible} iff the collection 
$\Ical = \{A \subset \Ga: A \not\in \Pcal\}$ is an ideal iff 
the {\em dual} collection 
$\Fcal = \Pcal^* =\{A \subset \Ga : A \cap B \ne\emptyset, \ \forall B \in \Pcal\}$ 
is a filter.
When $\Fcal$ is a filter of subsets of $\Ga$ 
the compact (nonempty)
subset $K = \bigcap \{\cls{A} : A \in \Fcal\} \subset \beta\Ga$ is called 
the {\em kernel} of $\Fcal$. Conversely, any compact subset $K \subset
\beta\Ga$ defines a filter  
$$
\Fcal = \{A \subset \Ga : \cls{A} \supset K\}.
$$
The correspondence $\Fcal \leftrightarrow K$ is one to one and we note
that
$$
\Ical = \{A \subset \Ga : \cls{A} \cap K = \emptyset\} \quad{\text {\rm and}} \quad
\Pcal = \{A \subset \Ga : \cls{A} \cap K \not=\emptyset\},
$$
are the corresponding ideal and divisible properties respectively.

\br

Expressed explicitly the divisible property which corresponds
to the ideal of RTF-sets is the following one:
a subset $A \subset \Ga$ is not right translation finite, {\em an NRTF-set}, if there exists an infinite subset $D \subset \Ga$ such that for every finite subset 
$F \subset D$
the corresponding intersection $\bigcap_{\del \in F} A\del^{-1}$ is infinite.
NLTF-sets are defined similarly and a set $A$ is NTF if 
if there exists an infinite subset $D \subset \Ga$ such that for every finite subset 
$F \subset D$ at least one of the two corresponding intersections 
$\bigcap_{\del \in F} A\del^{-1}$ and $\bigcap_{\del \in F} \del^{-1}A$ is infinite.
In this terminology Theorem \ref{Fcal} is stated as follows:

\begin{thm}
The properties NRTF, NLTF and NTF are divisible
with corresponding kernels $Z_R, Z_L$ and $Z$ respectively.
\end{thm}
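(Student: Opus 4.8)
The plan is to show that this final theorem is essentially a rephrasing of Theorem \ref{Fcal} together with the dictionary between filters/ideals/divisible-properties and their kernels set up in this section. First I would unwind the definitions: given a divisible property $\Pcal$, its dual filter is $\Fcal = \Pcal^*$, and the kernel of $\Fcal$ is $K = \bigcap\{\cls{A} : A \in \Fcal\}$. So to prove that NRTF is divisible with kernel $Z_R$, it suffices (by the one-to-one correspondence $\Fcal \leftrightarrow K$ recalled above) to show two things: (a) the ideal $\Ical_{RTF}$ of RTF sets equals the ideal $\Ical_R = \{A : \cls{A} \cap Z_R = \emptyset\}$ associated with the compact set $Z_R$; and (b) $Z_R$ really is the kernel of the corresponding filter, i.e. $Z_R = \bigcap\{\cls{B} : B \in \Fcal_R\}$. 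Statement (a) is exactly the content of Theorem \ref{Fcal}.1, which gives $\Fcal_R = \Fcal_{RTF}$ and hence, passing to complements, $\Ical_R = \Ical_{RTF}$; so NRTF $= \Pcal_R := \{A : A \notin \Ical_R\} = \{A : \cls{A} \cap Z_R \ne \emptyset\}$, which is the divisible property attached to the compact set $Z_R$.

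Next I would verify statement (b), that $Z_R$ is indeed the kernel of $\Fcal_R$. This is a general fact about $\beta\Ga$: for any closed subset $K$, if $\Fcal = \{B : \cls B \supset K\}$ then $\bigcap\{\cls B : B \in \Fcal\} = K$. One inclusion is trivial since each $\cls B \supset K$. For the reverse inclusion, if $p \notin K$, then by normality of $\beta\Ga$ (or directly, since clopen sets $\cls A$ for $A \subseteq \Ga$ form a basis) there is $A \subseteq \Ga$ with $p \in \cls A$ and $\cls A \cap K = \emptyset$; then $B := A^c$ satisfies $\cls B \supset K$ (so $B \in \Fcal$) while $p \notin \cls B$, showing $p \notin \bigcap\{\cls B : B \in \Fcal\}$. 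Thus the kernel of $\Fcal_R$ is exactly $Z_R$. The same argument, using Theorem \ref{Fcal}.2 and Theorem \ref{Fcal}.3 in place of \ref{Fcal}.1, handles NLTF with kernel $Z_L$ and NTF with kernel $Z = Z_R \cup Z_L$; here one also checks that NTF, as spelled out combinatorially just above the statement, coincides with the divisible property $\Pcal = \Pcal_R \cup \text{(analog)}$ attached to $Z$, which follows because $\cls A \cap (Z_R \cup Z_L) \ne \emptyset$ iff $\cls A \cap Z_R \ne \emptyset$ or $\cls A \cap Z_L \ne \emptyset$, i.e. $A$ is NRTF or NLTF.

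Finally I would assemble these pieces: since $\Ical_{RTF} = \Ical_R$, $\Ical_{LTF} = \Ical_L$, and $\Ical_{TF} = \Ical = \Ical_R \cap \Ical_L$ are all ideals (this is the content of Theorem \ref{Fcal} combined with Ruppert's Theorem \ref{Ru}), the dual collections NRTF, NLTF, NTF are divisible properties by the equivalence recalled at the start of this section; and by the kernel computation (b) their kernels are $Z_R$, $Z_L$, $Z$ respectively. I expect the only mildly delicate point — and it is not really an obstacle — to be making sure the purely combinatorial descriptions of NRTF and NTF given in the paragraph preceding the theorem match the topological descriptions $\{A : \cls A \cap Z_R \ne \emptyset\}$ etc.; but this matching is precisely the translation supplied by Theorem \ref{Fcal} (and its reformulation), so the proof reduces to citing that theorem and the general filter--kernel correspondence. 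The write-up can therefore be quite short.
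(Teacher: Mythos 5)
Your proposal is correct and follows exactly the route the paper intends: the paper gives no separate proof, presenting this theorem as a direct reformulation of Theorem \ref{Fcal} via the filter/ideal/divisible-property/kernel dictionary set up at the start of the section, and your write-up simply makes that translation (including the kernel identification $Z_R = \bigcap\{\cls{B} : B \in \Fcal_R\}$ and the matching of the single-$D$ combinatorial form of NTF with the disjunction NRTF-or-NLTF) explicit.
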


Note however that the ideal $\Ical_{W}$ is not what we call in 
\cite{Gl-80} the collection of interpolation sets of the algebra
$WAP(\Ga)$, as in Definition \ref{defn}.6 we postulate that $A \in \Ical_W$ 
when it is a $WAP(\Ga)$ interpolation set which additionally satisfies the requirement that $\ch_D \in WAP(\Ga)$. In \cite{Gl-80} (Corollary 5.3.2)
we have shown that the collection $\Jcal$ of WAP-interpolation sets 
has the property that if $\Ga = \bigcup_{i=1}^n A_i$ then at least
one of the sets $A_i$ is not in $\Jcal$. 
Let $\Ga_{dis}^{WAP}$ denote the {\em universal totally disconnected semitopological compactification} of $\Ga$. It is obtained as the
quotient $\Ga^{WAP}/{\sim}$ of $\Ga^{WAP}$ by the equivalence relation: 
$x \sim y \iff $ $x$ and $y$ lie in the same connected component. 
Let $WAP_{dis}(\Ga)$ denote the corresponding $C^*$-algebra.

\begin{prob}
(a) Is the collection of $WAP(\Ga)$-interpolation sets an ideal ?

(b) Is the collection of $WAP_{dis}(\Ga)$-interpolation sets an ideal ?
\end{prob}

\br

For simplicity let us assume next that $\Ga$ is abelian.
We will denote the group operation by $+$ but keep the notation
$(p,q) \mapsto pq$ for the semigroup operation on $\beta\Ga$.
Recall that a subset $A$ of $\Ga$ is a {\em D-set} if there is an infinite
sequence $\{\ga_i\}_{i=1}^\infty  \subset \Ga$ such that for every $i \ne j$
at least one of the elements $\ga_i - \ga_j$ or $\ga_j - \ga_i$ is in $A$.
The subset $A$ is called an {\em IP-set} if there is an infinite sequence 
$\{\ga_i\}_{i=1}^\infty  \subset \Ga$ such that for every
finite sequence $i_1 < i_2 < \cdots < i_n$ the element
$\ga_{i_1} + \ga_{i_2} + \cdots + \ga_{i_n}$ is in $A$.
It is well known that Hindman's theorem is equivalent to the fact that 
the collection of IP-sets is a divisible property with the set 
$K = \cls \{ v \in X : v^2 =v\}$ (the closure of the set of idempotents in $X$)
as its kernel. Obviously $K \subset Z$. 
It is easy to see that every IP-set is also a D-set.

The filter which corresponds
to the IP-sets is the collection of IP$^*$-sets:
$$
\{A \subset X : \cls{A} \supset K\} =
\{A \subset \Ga : 
A \cap B \ne\emptyset, \ \forall\ {\text{\rm{IP-set}}}\ B \}.
$$
Similarly the filter which corresponds
to the D-sets is the collection of D$^*$-sets:
$$
\{A \subset X : \cls{A} \supset K\} =\{A \subset \Ga : 
A \cap B \ne\emptyset, \ \forall\ {\text{\rm{D-set}}}\ B \}.
$$

The fact that the collection of D-sets is a divisible property is
equivalent to Ramsey's theorem and in \cite{Gl-80} we have identified the 
kernel of this divisible property as the following closed subset $L\subset X$. 
Define the set $V\subset X$ as follows:
$p \in X$ is in $V$ iff there is an element $q \in X$ and a net
$\ga_\al$ in $\Ga$ such that $\lim \ga_\al = q$ and 
$p = \lim \ga_\al^{-1}q$. Now put $L = \cls{V}$.

It is easy to see that $V \subset X^2$, whence $L \subset Z$. Thus
the identifications of the kernels $K$ and $L$, together with Theorem
\ref{Fcal}, immediately lead to the following corollary.

\begin{cor}
Every CTF-set (i.e. the complement of a TF-set) is a D$^*$-set and a 
fortiori an IP$^*$-set.
\end{cor}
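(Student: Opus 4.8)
The plan is to obtain the corollary immediately from Theorem \ref{Fcal}.3 together with the two kernel identifications recorded just above. Recall that, by Hindman's theorem (as used in \cite{Gl-80}), the IP$^*$-sets are exactly the members of the filter attached to the kernel $K = \cls\{v \in X : v^2 = v\}$, that is, the sets $A \subset \Ga$ with $\cls{A} \supset K$; and that, by the Ramsey-theorem identification of \cite{Gl-80}, the D$^*$-sets are exactly the members of the filter attached to the kernel $L = \cls{V}$, that is, the sets $A \subset \Ga$ with $\cls{A} \supset L$.

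Now let $B \subset \Ga$ be a CTF-set, i.e. $B^c \in \Ical_{TF}$, equivalently $B \in \Fcal_{TF}$. By Theorem \ref{Fcal}.3 we have $\Fcal_{TF} = \Fcal = \{B \subset \Ga : \cls{B} \supset Z\}$, so $\cls{B} \supset Z$. Since any idempotent $v \in X$ satisfies $v = v^2$ it belongs to $X^2$, whence $K \subset \cls{X^2} = Z_R \subset Z$; and since $V \subset X^2$ (as observed in the text), $L \subset \cls{X^2} = Z_R \subset Z$ as well. Consequently $\cls{B} \supset Z \supset L$, which by the kernel description means precisely that $B$ is a D$^*$-set, and $\cls{B} \supset Z \supset K$, which means $B$ is an IP$^*$-set. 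The final \emph{a fortiori} is then the trivial remark, already noted in the text, that every IP-set is a D-set, so that every D$^*$-set is automatically an IP$^*$-set; thus the IP$^*$ conclusion also follows formally from the D$^*$ one.

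I do not anticipate a genuine obstacle here: all the substance sits in Theorem \ref{Fcal} and in the identification of the kernels $K$ and $L$ imported from \cite{Gl-80}. The only point requiring a moment's care is the bookkeeping of the filter/kernel dictionary, namely that ``$D^*$-set'' and ``$IP^*$-set'' translate into the closure-containment conditions $\cls{B} \supset L$ and $\cls{B} \supset K$ respectively, together with the elementary verifications $K \subset Z$ and $L \subset Z$.
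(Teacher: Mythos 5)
Your argument is correct and is exactly the route the paper intends: Theorem \ref{Fcal}.3 gives $\cls{B} \supset Z$ for every CTF-set $B$, and the inclusions $K \subset Z$ and $L \subset Z$ (the latter via $V \subset X^2$) translate, through the filter/kernel dictionary, into $B$ being IP$^*$ and D$^*$. Your explicit check that idempotents lie in $X^2$ merely spells out what the paper dismisses as ``obviously $K \subset Z$,'' so there is nothing to add.
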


\br

\bibliographystyle{amsplain}

\end{document}